\numberwithin{equation}{section}
\numberwithin{figure}{section}
\theoremstyle{plain}
\newtheorem{thm}{\protect\theoremname}
  \theoremstyle{remark}
  \newtheorem{rem}[thm]{\protect\remarkname}
  \theoremstyle{definition}
  \theoremstyle{plain}
  \newtheorem{lem}[thm]{\protect\lemmaname}
  \newtheorem{defn}[thm]{\protect\defname}
\newcommand{\R}{\mathbb{R}}
\newcommand{\E}{\mathbb{E}}
\newcommand{\B}{\mathcal{B}}
\newcommand{\C}{\mathcal{C}}
\newcommand{\D}{\mathcal{D}}
\newcommand{\Bf}{\mathfrak{B}}
\newcommand{\U}{\mathcal{U}}
\newcommand{\V}{\mathcal{V}}
\DeclareMathOperator{\Li}{Li}
\DeclareMathOperator{\sech}{sech}
  \providecommand{\corollaryname}{Corollary}
  \providecommand{\examplename}{Example}
  \providecommand{\remarkname}{Remark}
  \providecommand{\defname}{Definition}
  \providecommand{\lemmaname}{Lemma}
\providecommand{\theoremname}{Theorem}
\begin{document}

\title{A Symbolic Approach to the Poly-Bernoulli Numbers}

\author{Tanay Wakhare$^{\dag}$ and Christophe Vignat$^{\ast}$}
\address{$^\ast$~Tulane University, New Orleans, LA 70118, USA and Universit\'{e} Paris Sud, France}
\email{cvignat@tulane.edu, christophe.vignat@u-psud.fr}
\address{$^{\dag}$~University of Maryland, College Park, MD 20742, USA}
\email{twakhare@gmail.com}

\maketitle

\begin{abstract}
We present a symbolic representation for the poly-Bernoulli numbers. This allows us to prove several new iterated integral representations for the poly-Bernoulli numbers, including an integral transform of the Bernoulli-Barnes numbers. We also deduce some new recurrences for the poly-Bernoulli numbers. Finally, we use these results to present a new iterated integral representation for the Arakawa-Kaneko zeta function, including  a nonlinear integral transform of the Barnes zeta function.
\end{abstract}

\section{Introduction}
In their seminal 1999 work, Arakawa and Kaneko \cite{ArakawaKaneko} introduced their namesake zeta function, defined by 
\begin{equation}\label{akdef}
\zeta_k(s) := \frac{1}{\Gamma(s)} \int_0^{\infty} \frac{t^{s-1}}{e^t-1}\Li_k\left(1-e^{-t}\right) dt
\end{equation}
where $\Li_{k}\left( z \right)$ is the polylogarithm function defined by
\[
\Li_{k}\left( z \right) = \sum_{n=1}^{+\infty} \frac{z^n}{n^k}.
\]
The importance of this function is that when $s=m$ is a positive integer, we have the evaluation
\begin{equation}
\label{zetakm}
\zeta_k(m)  = \zeta^\ast(k+1,\{1\}^{m-1}),
\end{equation}
where 
$$\zeta^\ast({s_1,\ldots,s_k}):= \sum_{n_1\geq n_2>\cdots \geq n_k\ge 1} \frac{1}{n_1^{s_1}n_2^{s_2}\cdots n_k^{s_k}} , s_1\geq 2, s_i\geq 1, 2\leq i \leq k$$
is a \textit{multiple zeta starred value} (MZSV). Therefore, we can study $\zeta_k(s)$ complex analytically, which specializes to nontrivial relations for the discretized MZSVs. MZSVs are natural generalizations of the Riemann zeta function which have been systematically studied since the 1990s \cite{Zagier}. They occur naturally in the calculation of higher order Feynman diagrams and renormalization constants in physics \cite{broadhurst}, and characterizing all linear dependence relations between MZSVs and the closely related multiple zeta values (MZVs)
$$\zeta({s_1,\ldots,s_k}):= \sum_{n_1 > n_2>\cdots > n_k\ge 1} \frac{1}{n_1^{s_1}n_2^{s_2}\cdots n_k^{s_k}} , s_1\geq 2, s_i\geq 1, 2\leq i \leq k,$$ has become a hot 
topic of recent research. For an excellent survey 
of this topic, we recommend the review article of Zagier \cite{Zagier}.

We now consider the \textit{poly-Bernoulli numbers} $B_n^{(k)}$, defined by the generating function
\begin{equation}\label{polydef}
\sum_{n\ge0}\frac{B_{n}^{\left(k\right)}}{n!}t^{n}=\frac{\Li_{k}\left(1-e^{-t}\right)}{1-e^{-t}},
\end{equation}
and their polynomial extension, the \textit{poly-Bernoulli polynomials} with generating function

\begin{equation}\label{polynomialpolydef}
\sum_{n\ge0}\frac{B_{n}^{\left(k\right)}\left( z \right)}{n!}t^{n}=\frac{\Li_{k}\left(1-e^{-t}\right)}{1-e^{-t}}e^{zt}.
\end{equation}

The importance of this definition is that for a positive integer $n,$ we have the explicit value for the analytic continuation $\zeta_{k}(-n) = (-1)^nB_n^{(k)}$, generalizing the classical result $\zeta(-n) = (-1)^n\frac{B_{n+1}}{n+1}$ for the Riemann zeta function \cite{Arakawa}. We wish to study relations for the poly-Bernoulli numbers; these systematically translate to relations for the analytic continuation of the Arakawa-Kaneko zeta function. In Section \ref{sec2}, we provide the necessary bakground on umbral calculus and the Bernoulli symbol. In Section \ref{sec3} we provide our main results: a symbolic representation for the poly-Bernoulli numbers, and a new integral representation for the poly-Bernoulli numbers in terms of Bernoulli-Barnes numbers. This allows us to derive several old and new recurrences for the poly-Bernoulli numbers in Section \ref{sec4}. Finally, in Section \ref{sec5} we provide new integral and symbolic expressions for the Arakawa-Kaneko zeta function.

\section{Umbral Background }\label{sec2}
The results in this paper depend on \textit{umbral calculus}, a symbolic computation method for which we provide a short introduction. The key idea is that we can express Bernoulli numbers in terms of moments: more precisely \cite{Vignat1}, 
\begin{equation}\label{bernint}
B_n = \frac{\pi}{2}\int_{-\infty}^\infty \left( \imath t-\frac{1}{2} \right)^n \sech^2\left(  \pi t \right) dt,\,\,n\ge 0.
\end{equation}
Given a random variable $\mathcal{L}$ distributed according to the secant square law  $\mathcal{L} \sim \frac{\pi}{2}\sech^2(\pi t)$, the Bernoulli numbers are therefore the moments $B_n = \E \left( i \mathcal{L} - \frac12\right)^n$, where $\E$ denotes epectation value. This interpretation extends to polynomials; the Bernoulli polynomials are the expectation $$B_n\left( x \right)=\E\left(i\mathcal{L} +x -\frac12\right)^n =  \frac{\pi}{2}\int_{-\infty}^\infty \left( \imath t-\frac{1}{2} +x \right)^n \sech^2\left(  \pi t \right) dt.$$
Alternatively, the Bernoulli polynomials  are defined by the generating function
\begin{equation}\label{bpolygenfunc}
\sum_{n=0}^\infty B_n(z)\frac{t^n}{n!} = \frac{e^{zt} t}{e^t-1}, 
\end{equation}
and the Bernoulli numbers as $B_n = B_n(0)$.
Therefore, any occurrence of a Bernoulli number may be replaced with an equivalent \textit{Bernoulli symbol} (or \textit{Bernoulli umbra}) $\mathcal{B}$, such  every Bernoulli number ${B}_{n}$ is mapped to a power $\mathcal{B}^n$. We then perform whatever operations we want, then apply the ``evaluation map" $eval(\mathcal{B}^n) = B_n$, a linear functional. This corresponds to replacing any occurrence of $B_n$ with $\left( \imath t-\frac{1}{2} \right)^n$, performing some manipulations, then multiplying by $\sech^2(\pi t)$ and integrating across $\mathbb{R}$. For more details about this approach, we recommend \cite{Gessel}.

We then have the evaluation rules for the Bernoulli symbols, which essentially coincide with those of probabilistic expectation:
\begin{itemize}
\item A product of several Bernoulli numbers is replaced with independent Bernoulli umbr{\ae} according to 
$B_{n_1}B_{n_2} \mapsto \mathcal{B}_{1}^{n_1}\mathcal{B}_{2}^{n_2}$
\item For two identical symbols, $\mathcal{B}_1^{n_1}\mathcal{B}_1^{n_2}=\mathcal{B}_1^{n_1+n_2}$
\item We have the periodicity relation $\mathcal{B}+1 = - \mathcal{B}$.
\end{itemize}
For example, the simple recurrence 

\[
\sum_{k=0}^n \binom{n}{k} B_k = (-1)^n B_n,\,\, n\ge 0
\]
can be derived through umbral means:
$$\sum_{k=0}^n \binom{n}{k} B_k   \underset{\mathrm{umbralize}}{\mapsto} \sum_{k=0}^n \binom{n}{k} \mathcal{B}^k = (\mathcal{B}+1)^n = (-1)^n \mathcal{B}^n \underset{\mathrm{eval}}{\mapsto} (-1)^n B_n.$$
If we explicitly write out the integrals underlying this and note that $\mathcal{L}$ is even, we have performed the operations
\begin{align*}
\sum_{k=0}^n \binom{n}{k} B_k   &\underset{\mathrm{umbralize}}{=} \sum_{k=0}^n \binom{n}{k} \E \left( i \mathcal{L} - \frac12\right)^k \\
&= \E \left[ \sum_{k=0}^n \binom{n}{k}  \left( i \mathcal{L} - \frac12\right)^k \right] \\
&= \E  \left( i \mathcal{L} - \frac12 + 1\right)^n \\
&=(-1)^n \E  \left( i \mathcal{L} - \frac12 \right)^n \\
& \underset{\mathrm{eval}}{=}(-1)^n B_n.
\end{align*}
Therefore, we see that the umbral method is equivalent to the linearity of expectation.

Furthermore, there is a conjugate symbol to $\mathcal{B}$, which is the \textit{uniform symbol} $\mathcal{U}$. This uniform symbol acts on an arbitrary function by
\[
f\left( x+\mathcal{U} \right) = \int_{0}^{1} f\left( x+u \right) du
\]
and corresponds to a continuous random variable distributed uniformly over the interval $[0,1]$. The Bernoulli and the uniform symbol essentially cancel each other, as is seen from the generating function identity
\[
\exp\left(z\B  \right)\exp\left(z\U  \right)= \frac{z}{e^z-1}\frac{e^z-1}{z}=1=\exp\left(z\left( \mathcal{U} +\mathcal{B}  \right) \right) .
\]
This is equivalent to the identity $\E\left(\U+\B \right)^n = \delta_{n,0}$ for all $n \in \mathbb{N}$, where the $\delta$ is the Kronecker symbol. Therefore, for any sufficiently smooth analytic function, 
\[
f\left( z+\mathcal{U}+\mathcal{B} \right)=f\left( z \right).
\]

\section{Poly-Bernoulli Umbra}\label{sec3}
We now wish to use the $\U$ and $\B$ symbols to characterize the poly-Bernoulli numbers as moments. We also prove an integral transform in terms of the Bernoulli-Barnes numbers. Therefore, we introduce the following nonlinear symbol below, which is the key innovation of our paper:
\begin{defn}\label{polysymdef}
Consider $k$ independent Bernoulli umbr{\ae}  $\left\{ \mathcal{B}_{i}\right\} _{1\le i\le k}$
and $\left( k-1 \right)$ independent uniform umbr{\ae} $\left\{ \mathcal{U}_{i}\right\} _{1\le i\le k-1}.$ Then define the \textit{poly-Bernoulli umbra} $\B^{(k)}$ as 
\begin{equation}
\label{symbol B}
\B^{(k)}:= 1+\sum_{l=1}^k \B_l\prod_{j=l}^{k-1}\U_j =  1+ \mathcal{B}_{k}+\mathcal{U}_{k-1}\mathcal{B}_{k-1}+\dots+\mathcal{U}_{k-1}\cdots \mathcal{U}_{1}\mathcal{B}_{1}.
\end{equation}
\end{defn}
Note that from this definition, we have the symbolic recursion 
\[
\B^{(k)} = 1+\B_k + \U_{k-1}\left( \B^{(k-1)}-1 \right),
\]
and $\B^{(1)}=\B+1$ (meaning $\B^{(1)}_n = B_n\left(1\right)$) reduces to the usual Bernoulli umbra. 

Kaneko and Arakawa also defined a companion sequence $C_{n}^{\left(k\right)}$ to the poly-Bernoulli numbers \cite{ArakawaKaneko}, with generating function

\begin{equation}\label{polydefC}
\sum_{n\ge0}\frac{C_{n}^{\left(k\right)}}{n!}t^{n}=\frac{\Li_{k}\left(1-e^{-t}\right)}{e^{t}-1},
\end{equation}
which suggests the definition of the new symbol $\mathcal{C}^{\left( k \right)}$ as
\begin{equation}
\mathcal{C}^{\left(k\right)}:=\mathcal{B}^{\left(k\right)}-1.\label{eq:BCCB}
\end{equation}
Therefore, with the notation of Definition \ref{polysymdef},
\[
\mathcal{C}^{\left(k\right)}:=\sum_{l=1}^k \B_l\prod_{j=l}^{k-1}\U_j .
\]
In terms of the sequences $\{B_{n}^{\left( k \right)}\}$ and $\{C_{n}^{\left( k \right)}\}$, this implies
\[
C_{n}^{\left(k\right)}=\sum_{l=0}^{n}\binom{n}{l}\left(-1\right)^{n-l}B_{n}^{\left(k\right)},\thinspace\thinspace B_{n}^{\left(k\right)}=\sum_{l=0}^{n}\binom{n}{l}C_{n}^{\left(k\right)}.
\]
Notice that the symbol $\mathcal{C}^{\left(k\right)}$ satisfies the recurrence
\[
\mathcal{C}^{\left(k\right)}=\mathcal{B}_k+\mathcal{U}_{k-1}\mathcal{C}^{\left(k-1\right)}
\]
with initial value $\mathcal{C}^{\left(1\right)}=\mathcal{B},$ the Bernoulli symbol.
\begin{thm}\label{thm2}
The poly-Bernoulli polynomials are  equal to
\begin{equation}
B_{n}^{\left(k\right)}\left(z\right)=\left(z+\B^{(k)}\right)^{n},\label{eq:umbral representation}
\end{equation}
which is equivalent to the multiple integral representation
\begin{equation}
B_n^{(k)}(z) = \left(\frac{\pi}{2}\right)^k \int_{\R^k} \prod_{l=1}^k \sech^2(\pi w_l)dw_l \int_{[0,1]^{k-1}} \prod_{l=1}^{k-1} du_l   \left(1+z+ \sum_{l=1}^k \left( iw_l-\frac12 \right)\prod_{j=l}^{k-1}u_j \right)^n.
\end{equation}
As a consequence, there exists a measure $\mu_{k}:\mathbb{R}\to\mathbb{R}$ such that the
poly-Bernoulli polynomial can be expressed as the moment
\[
B_{n}^{\left(k\right)}\left(z\right)=\int\left(z+w\right)^{n}d\mu_k\left(w\right).
\]
\end{thm}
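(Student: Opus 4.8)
The plan is to reduce the entire statement to a single moment generating function computation. Write $\phi_k(t) := \E\, e^{t\B^{(k)}}$ for the MGF of the poly-Bernoulli umbra. Since the generating function $(1-e^{-t})^{-1}\Li_k(1-e^{-t})e^{zt}$ factors the $e^{zt}$ off cleanly, and since reading off coefficients of the product yields the Appell-type relation $B_n^{(k)}(z)=\sum_{j=0}^n \binom nj z^{n-j}B_j^{(k)}$, it suffices to prove $\phi_k(t)=(1-e^{-t})^{-1}\Li_k(1-e^{-t})$. The polynomial identity $B_n^{(k)}(z)=\E(z+\B^{(k)})^n$ then follows by multiplying $\phi_k(t)$ by $e^{zt}$ and matching coefficients of $t^n/n!$, using only linearity of $\E$.

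I would prove the MGF identity by induction on $k$ through the companion symbol $\C^{(k)}=\B^{(k)}-1$, which satisfies the \emph{linear} recursion $\C^{(k)}=\B_k+\U_{k-1}\C^{(k-1)}$ with $\C^{(1)}=\B$. Setting $g_k(t):=\E\,e^{t\C^{(k)}}$, the base case $g_1(t)=t/(e^t-1)$ matches $\Li_1(1-e^{-t})/(e^t-1)$ because $\Li_1(1-e^{-t})=t$. For the step, independence of $\B_k$, $\U_{k-1}$ and $\C^{(k-1)}$ gives $g_k(t)=\E e^{t\B_k}\cdot \E e^{t\U_{k-1}\C^{(k-1)}}$, and the defining action of the uniform umbra converts the nonlinear product into an integral,
\[
\E\, e^{t\U_{k-1}\C^{(k-1)}}=\E\int_0^1 e^{tu\,\C^{(k-1)}}\,du=\frac1t\int_0^t g_{k-1}(s)\,ds .
\]
Differentiating the claimed formula and using $\Li_k'(x)=\Li_{k-1}(x)/x$ together with $\tfrac{d}{dt}(1-e^{-t})=e^{-t}$ yields $\tfrac{d}{dt}\Li_k(1-e^{-t})=\Li_{k-1}(1-e^{-t})/(e^t-1)=g_{k-1}(t)$, so that $\int_0^t g_{k-1}=\Li_k(1-e^{-t})$ (the constant vanishes since $\Li_k(0)=0$). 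This closes the induction for $g_k$, and $\phi_k(t)=e^t g_k(t)$ supplies exactly the factor converting $(e^t-1)^{-1}$ into $(1-e^{-t})^{-1}$.

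The multiple integral representation then follows by substituting, for each independent Bernoulli umbra, the moment rule $\B_l^m \leftrightarrow \tfrac\pi2\int_\R (iw_l-\tfrac12)^m\sech^2(\pi w_l)\,dw_l$ and, for each uniform umbra, $\U_j\leftrightarrow\int_0^1(\cdot)\,du_j$; because the umbrae are independent, the expectation of the binomial expansion of $(z+\B^{(k)})^n$ factorizes into precisely the claimed iterated integral. Finally, for the measure statement I would define $\mu_k$ as the image (pushforward) of the product measure $\prod_l \tfrac\pi2\sech^2(\pi w_l)\,dw_l\times\prod_j du_j$ under the evaluation map $(w,u)\mapsto 1+\sum_{l=1}^k(iw_l-\tfrac12)\prod_{j=l}^{k-1}u_j$, so that the change-of-variables formula gives $\int (z+w)^n\,d\mu_k(w)=\E(z+\B^{(k)})^n=B_n^{(k)}(z)$. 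I expect the main obstacle to be exactly here: the evaluation map is complex-valued (each $\B_l$ is the complex umbra $iw_l-\tfrac12$), so a priori $\mu_k$ lives on $\mathbb{C}$ rather than on $\R$. The resolution is the conjugation symmetry $w_l\mapsto -w_l$, under which $\sech^2$ is invariant while the evaluation map passes to its complex conjugate; this forces every moment to be real and allows one to descend to a genuine (signed) representing measure on the real line, which is what makes the final moment expression meaningful.
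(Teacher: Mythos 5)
Your proof is correct and follows essentially the same route as the paper: both realize the recursion $\C^{(k)}=\B_k+\U_{k-1}\C^{(k-1)}$ at the level of exponential generating functions, with the uniform umbra turning the operator $\frac{1}{e^{t}-1}\int_{0}^{t}$ into the product $e^{t\B_k}\,e^{t\U_{k-1}\C^{(k-1)}}$ via the substitution $s=tu$. The only real differences are that you derive the needed identity $\frac{d}{dt}\Li_k(1-e^{-t})=\Li_{k-1}(1-e^{-t})/(e^{t}-1)$ from scratch where the paper imports Kaneko's iterated-integral representation of the generating function, and that you explicitly flag (though do not fully resolve) the complex-valuedness issue in the final ``representing measure'' claim, which the paper passes over in silence.
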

\begin{proof}
Our starting point is the iterated integral representation of the generating function of the poly-Bernoulli numbers 
\cite{Kaneko}
$$\sum_{n\ge0}\frac{B_{n}^{\left(k\right)}}{n!}t^{n}= e^t\frac{1}{e^t-1} \int_0^{t} \frac{dt_k}{e^{t_k-1}}\int_{0}^{t_k}\frac{dt_{k-1}}{e^{t_{k-1}}-1} \int_0^{t_{k-1}}\cdots \int_0^{t_3} \frac{dt_2}{e^{t_2}-1}\int_0^{t_2}dt_1.\label{eq:iterated}$$
where the operator $\D=\frac{1}{e^{t}-1}\int_{0}^{t}$ appears $k-1$
times, so that
\[
e^{t\B^{\left(k\right)}}=e^{t}\D^{k-1}\frac{t}{e^{t}-1}.
\]
Using \eqref{bernint}, the action of the operator $\D$ on the initial function $\frac{t}{e^{t}-1}$
is expressed in terms of Bernoulli and uniform symbols as
\[
\D\frac{t}{e^{t}-1}=\D e^{t\B_{1}}=\frac{1}{e^{t}-1}\int_{0}^{t}e^{t\B_{1}}dt=\frac{1}{e^{t}-1}\int_{0}^{1}te^{tu\B_{1}}du=e^{t\B_{2}}e^{t\U_{1}\B_{1}}.
\]
Hence the operator $\D$ replaces $\B_{1}=\C^{(1)}$ with $\U_{1}\B_{1}+\B_{2}=\C^{(2)}$. We similarly deduce
\[
\D^{2}\frac{t}{e^{t}-1}=\D e^{t\left(\B_{2}+\U_{1}\B_{1}\right)}=e^{t\B_{3}}e^{tU_{2}\left(\B_{2}+\U_{1}\B_{1}\right)}
\]
and more generally 
\[
e^{t}\D^{k-1}\frac{t}{e^{t}-1}=e^{t\left(1+\B_{k}+\U_{k-1}\B_{k-1}+\dots+\U_{k-1}\dots \U_{1}\B_{1}\right)} = e^{t\B^{(k)}}.
\]
To pass from poly-Bernoulli numbers to poly-Bernoulli polonomials, we multiply both generating functions by $e^{zt}$, and then compare coefficients of $t^n$ to produce the desired result.
\end{proof}

Note that this procedure has converted a nested $k$--fold integral, in which the limits of the inner integrals depend on variables of integration, into a $(2k-1)$--fold integral over uniform bounds. As another consequence of the representation (\ref{eq:umbral representation}),
we have an expression for the poly-Bernoulli polynomials in terms
of the Bernoulli-Barnes polynomials. The Bernoulli-Barnes polynomials $\Bf_n^{(k)}$, a vectorized generalization of the Bernoulli polynomials, are defined by the generating function
\[
\sum_{n\ge0}\frac{\mathfrak{B}_{n}^{\left(k\right)}\left(a_{1},\dots,a_{k};z\right)}{n!}t^{n}=e^{zt}\prod_{i=1}^{k}\frac{t}{e^{a_{i}t}-1}.
\]
\begin{thm}\label{thm3}
We have the $(k-1)$--dimensional integral transform
\begin{align*}
B_{n}^{\left(k\right)}\left(z\right) & =\mathfrak{B}_{n}^{\left(k\right)}\left(\U_{k-1}\cdots \U_{1},\U_{k-1}\cdots \U_{2},\dots,\U_{k-1},1;z+1\right) \prod_{i=1}^{k-1}\U_i^i\\
 & =\int_{\left[0,1\right]^{k-1}}\mathfrak{B}_{n}^{\left(k\right)}\left(u_{k-1}\cdots u_{1},u_{k-1}\cdots u_{2},\dots,u_{k-1},1;z+1\right)u_{1}u_{2}^{2}\cdots u_{k-1}^{k-1}du_{1}\cdots du_{k-1}.
\end{align*}
\end{thm}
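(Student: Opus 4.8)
The plan is to first derive a moment representation for the Bernoulli--Barnes polynomials, and then to recognize the poly-Bernoulli umbra $\B^{(k)}$ as a Bernoulli--Barnes structure whose weights are products of uniform umbr\ae{}. Since $\frac{s}{e^{s}-1}=e^{s\B}$ umbrally (this is \eqref{bpolygenfunc} at $z=0$), substituting $s=a_{i}t$ gives $\frac{t}{e^{a_{i}t}-1}=\frac{1}{a_{i}}e^{a_{i}t\B_{i}}$ for an independent Bernoulli umbra $\B_{i}$. Taking the product over $i$ and multiplying by $e^{zt}$ turns the defining generating function of $\Bf_{n}^{(k)}$ into $\frac{1}{a_{1}\cdots a_{k}}\,e^{t\left(z+\sum_{i}a_{i}\B_{i}\right)}$, so comparing coefficients of $t^{n}/n!$ yields the moment representation
\[
\Bf_{n}^{\left(k\right)}\left(a_{1},\dots,a_{k};z\right)=\frac{1}{a_{1}\cdots a_{k}}\left(z+\sum_{i=1}^{k}a_{i}\B_{i}\right)^{n}.
\]

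Next I would rewrite $\B^{(k)}$ in the form dictated by Definition \ref{polysymdef}, namely $\B^{(k)}=1+\sum_{l=1}^{k}c_{l}\B_{l}$ with weights $c_{l}=\prod_{j=l}^{k-1}\U_{j}$, so that $c_{k}=1$. These weights are exactly the arguments listed in the theorem, in the order $c_{1},c_{2},\dots,c_{k-1},c_{k}=1$. Counting how many times each $\U_{j}$ appears across the $c_{l}$ (namely $\U_{j}$ occurs once in each $c_{l}$ with $l\le j$) gives the product
\[
\prod_{l=1}^{k}c_{l}=\prod_{j=1}^{k-1}\U_{j}^{\,j},
\]
which is precisely the normalizing factor $\prod_{i=1}^{k-1}\U_{i}^{i}$ appearing in the statement.

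The final step is to condition and then integrate. By Theorem \ref{thm2}, $B_{n}^{(k)}(z)=\left(z+\B^{(k)}\right)^{n}=\E\left(z+1+\sum_{l}c_{l}\B_{l}\right)^{n}$. Because the uniform umbr\ae{} $\U_{j}$ are independent of the Bernoulli umbr\ae{} $\B_{l}$, I condition on the former; with the $c_{l}$ frozen, the inner Bernoulli expectation is exactly the moment representation derived above with parameters $a_{l}=c_{l}$ and base point $z+1$, hence it equals $\left(\prod_{l}c_{l}\right)\Bf_{n}^{(k)}\left(c_{1},\dots,c_{k};z+1\right)$. This is the first (umbral) line of the theorem, with $\prod_{l}c_{l}=\prod_{i}\U_{i}^{i}$ supplied by the previous step; averaging the remaining uniform umbr\ae{}, i.e. integrating each $u_{j}$ over $[0,1]$, produces the second line.

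The one point to watch is the substitution of umbr\ae{} (the $c_{l}$) for the numerical parameters $a_{l}$ of $\Bf_{n}^{(k)}$. This is legitimate precisely because I multiply through by $\prod_{l}c_{l}$ first: the cleared expression $\left(\prod_{l}c_{l}\right)\Bf_{n}^{(k)}(c_{1},\dots,c_{k};z+1)=\left(z+1+\sum_{l}c_{l}\B_{l}\right)^{n}$ is a genuine polynomial in the $c_{l}$ and $\B_{l}$, so no inversion of an umbra is ever required, and the independence of the $\U$ and $\B$ families justifies treating the $c_{l}$ as constants inside the Bernoulli expectation before integrating them out.
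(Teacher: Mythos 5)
Your proof is correct and follows essentially the same route as the paper: both identify $B_n^{(k)}(z)=\left(z+1+\sum_l \B_l\prod_{j=l}^{k-1}\U_j\right)^n$ with the umbral form $\Bf_n^{(k)}(a_1,\dots,a_k;z)=\frac{1}{\prod_i a_i}\left(z+\sum_i a_i\B_i\right)^n$ by taking the $a_i$ to be the cumulative products of uniform umbr\ae{} and then integrating them out. The only difference is cosmetic: you rederive the Bernoulli--Barnes moment representation from its generating function where the paper cites \cite{Lin1}, and you make explicit the bookkeeping ($\prod_l c_l=\prod_j\U_j^{\,j}$ and the independence argument justifying the substitution) that the paper leaves implicit.
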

\begin{proof}
The main result of \cite{Lin1} was the umbral characterization
$$\Bf_n^{(k)}(a_1,\ldots,a_k;z) = \frac{1}{\prod_{i=1}^k a_i}\left(z + \sum_{i=1}^k a_i\B_i\right)^n,$$
where the $\{\B_i\}$ are independent Bernoulli umbr\ae. Comparing this with the umbral representation \eqref{eq:umbral representation}
$$B_{n}^{\left(k\right)}\left(z\right)=\left(z+\B^{(k)}\right)^{n} =\left(z+1+\sum_{l=1}^k \B_l\prod_{j=l}^{k-1}\U_j\right)^{n}  $$
and taking $a_1=1, a_i = \prod_{j=1}^{i-1}\U_j, i \geq 2,$ gives the first result. The second follows by explicitly specifying the action of the uniform operators $\U_i$.
\end{proof}

The following result shows how a change of variables in the integral representation in Theorem \ref{thm3} allows us to replace the cumulative products of variables $\U_{i}$ by linear terms, at the price of a more complicated integration domain.

\begin{thm}
The Arakawa-Kaneko polynomials are expressed as
\[
B_{n}^{\left(k\right)}\left(z\right)=
\mathfrak{B}_{n}^{\left( k \right)}
\left( \V_{1},\dots,\V_{k-1};z+1 \right)\prod_{i=1}^{k-1}\V_{i},
\]
where 
$\left\{ \V_{i}\right\} _{1\le i\le k}$ are random variables
with the mutual probability density
\begin{align*}
f_{V_{1},\dots,V_{k-1}}\left(v_{1},\dots v_{k-1}\right) & =\begin{cases}
\frac{1}{v_{2}\dots v_{k-1}} & 0\le v_{1}\le\dots\le v_{k-1}\le1,\\
0 & \text{else.}
\end{cases}
\end{align*}
This yields the integral representation
\[
B_{n}^{\left(k\right)}\left(z\right)=\int_{0\le v_{1}\le\dots\le v_{k-1}\le1} v_{1}\mathfrak{B}_{n}^{\left( k \right)}
\left( v_{1},\dots,v_{k};z+1 \right) dv_{1}\dots dv_{k-1}
\]
and, in terms of Bernoulli umbr{\ae},
\[
B_{n}^{\left(k\right)}\left(z\right)=\int_{0\le v_{1}\le\dots\le v_{k-1}\le1} 
\left( 1+z+ \B_{k} +v_{k-1} \B_{k-1} +\dots +v_{1} \B_{1} \right)^{n}
\frac{dv_{1}\dots dv_{k-1}}{v_{2}\dots v_{k-1}}
\]
\end{thm}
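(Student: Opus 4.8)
The plan is to derive all three displays from the integral representation in Theorem~\ref{thm3} by a single change of variables that trades the cumulative products $u_{k-1}\cdots u_{i}$ appearing as Barnes parameters for fresh coordinates. Writing the starting integral as
\[
B_{n}^{(k)}(z)=\int_{[0,1]^{k-1}}\Bf_{n}^{(k)}\bigl(u_{k-1}\cdots u_{1},\dots,u_{k-1},1;z+1\bigr)\,u_{1}u_{2}^{2}\cdots u_{k-1}^{k-1}\,du_{1}\cdots du_{k-1},
\]
I would set $v_{i}:=\prod_{j=i}^{k-1}u_{j}$ for $1\le i\le k-1$, so that the vector of Barnes parameters becomes precisely $(v_{1},\dots,v_{k-1},1)$, already matching the arguments in the target statement.

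The first thing to pin down is the new domain together with the Jacobian. From $v_{i}=u_{i}v_{i+1}$ and $u_{j}\in[0,1]$ one gets $v_{i}\le v_{i+1}$, so the unit cube maps onto the ordered simplex $0\le v_{1}\le\dots\le v_{k-1}\le1$; the explicit inverse $u_{k-1}=v_{k-1}$, $u_{i}=v_{i}/v_{i+1}$ shows the map is a bijection away from a null set. Since $v_{i}$ involves only $u_{i},\dots,u_{k-1}$, the matrix $(\partial v_{i}/\partial u_{m})$ is triangular, and its diagonal entries $\partial v_{i}/\partial u_{i}=\prod_{j>i}u_{j}$ equal $v_{i+1}$ for $i<k-1$ and $1$ for $i=k-1$; hence $dv_{1}\cdots dv_{k-1}=(v_{2}\cdots v_{k-1})\,du_{1}\cdots du_{k-1}$.

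The one genuine computation is the transformation of the weight: substituting $u_{i}=v_{i}/v_{i+1}$ into $\prod_{i=1}^{k-1}u_{i}^{i}$ causes the exponents to telescope, collapsing the product to $\prod_{i=1}^{k-1}v_{i}$. Multiplying this by the Jacobian factor $1/(v_{2}\cdots v_{k-1})$ leaves exactly $v_{1}$, so that $u_{1}u_{2}^{2}\cdots u_{k-1}^{k-1}\,du_{1}\cdots du_{k-1}=v_{1}\,dv_{1}\cdots dv_{k-1}$ and the middle display of the theorem drops out. I expect this telescoping to be the only step requiring care; everything else is formal.

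The probabilistic and umbral forms then follow at no extra cost. By construction the $\V_{i}$ are the push-forwards of the independent uniform umbr\ae{} $\U_{j}$ under $v_{i}=\prod_{j\ge i}u_{j}$, so their joint law is automatically a probability measure, and the Jacobian already computed identifies its density as $1/(v_{2}\cdots v_{k-1})$ on the simplex (in particular it integrates to $1$). Regrouping $v_{1}\,dv=(v_{1}\cdots v_{k-1})\,dv/(v_{2}\cdots v_{k-1})$ turns the integral into the expectation $\Bf_{n}^{(k)}(\V_{1},\dots,\V_{k-1},1;z+1)\prod_{i=1}^{k-1}\V_{i}$. Finally, inserting the umbral characterization $\Bf_{n}^{(k)}(a_{1},\dots,a_{k};z)=(\prod_{i}a_{i})^{-1}\bigl(z+\sum_{i}a_{i}\B_{i}\bigr)^{n}$ recalled in the proof of Theorem~\ref{thm3}, with $a_{i}=v_{i}$ and $a_{k}=1$, the prefactor $1/(v_{1}\cdots v_{k-1})$ combines with the weight $v_{1}$ to give $1/(v_{2}\cdots v_{k-1})$, producing the closing Bernoulli-umbra representation $\int(1+z+\B_{k}+v_{k-1}\B_{k-1}+\dots+v_{1}\B_{1})^{n}\,dv/(v_{2}\cdots v_{k-1})$.
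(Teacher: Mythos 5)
Your proof is correct and follows essentially the same route as the paper: the identical change of variables $v_i=\prod_{j\ge i}u_j$, the same triangular Jacobian with determinant $v_2\cdots v_{k-1}$, and the same identification of the image domain as the ordered simplex. You additionally spell out the telescoping of the weight $\prod u_i^i$ to $\prod v_i$ and the passage to the umbral form, steps the paper leaves implicit, but the argument is the same.
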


\begin{proof}
Perform the change of variables 
\[
\begin{cases}
v_{k-1} & =u_{k-1}\\
v_{k-2} & =u_{k-1}u_{k-2}\\
\vdots\\
v_{1} & =u_{k-1}\dots u_{1}
\end{cases}
\]
in the integral representation in Theorem \ref{thm3}; the Jacobian is then
\[
J=\frac{\partial v}{\partial u}=\left[\begin{array}{ccccc}
u_{k-1}\dots u_{2} & 0 & \cdots & \cdots & 0\\
* & u_{k-1}\dots u_{3} & 0 & \cdots & 0\\
 &  &  &  & \vdots\\
 &  &  & u_{k-1}   & 0\\
 * &  &  & * & 1
\end{array}\right],
\]
with determinant
\[
\vert J\vert  = \prod_{i=2}^{k}u_{i}^{i-1}=\prod_{i=2}^{k-1}v_{i}.
\]
The product of uniform densities
\[
\mathbbm{1}_{\left[0,1\right]}\left(u_{1}\right)\times\dots\times\mathbbm{1}_{\left[0,1\right]}\left(u_{k-1}\right)
\]
is transformed into
\[
\mathbbm{1}_{\left[0,1\right]}\left(\frac{v_{1}}{v_{2}}\right)\mathbbm{1}_{\left[0,1\right]}\left(\frac{v_{2}}{v_{3}}\right)\times\dots\times
\mathbbm{1}_{\left[0,1\right]}\left(\frac{v_{k-2}}{v_{k-1}}\right)
\mathbbm{1}_{\left[0,1\right]}\left(v_{k-1}\right)
=\begin{cases}
1, & 0\le v_{1}\le\dots\le v_{k-1}\le1\\
0, & \text{else}
\end{cases},
\]
which is the desired result.
\end{proof}

\section{Consequences}\label{sec4}
This symbolic representation enables us to quickly prove various recursions for the poly-Bernoulli polynomials. All the identities in this section have analogs for the companion $C^{(n)}$ sequence with identical proofs, which we omit.
\begin{thm}
The poly-Bernoulli polynomials satisfy the recurrences
\[
B_{n}^{\left(k\right)}\left(z\right)=\sum_{m=0}^{n}\binom{n}{m}B_{n-m}^{\left(k-1\right)}\sum_{l=0}^{m}\binom{m}{l}\frac{\left(-1\right)^{m-l}B_{l}\left(z+1\right)}{n-l+1}.
\]
and
\[
C_{n}^{\left(k\right)}\left(z\right)=\sum_{m=0}^{n}\binom{n}{m}B_{m}\left( z \right)\frac{C^{\left( k-1 \right)}_{n-m}}{n-m+1}.
\]
\end{thm}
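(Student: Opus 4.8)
The plan is to derive both recurrences directly from the one-step symbolic recursions recorded just before Theorem~\ref{thm2}, namely $\C^{(k)}=\B_k+\U_{k-1}\C^{(k-1)}$ and $\B^{(k)}=1+\B_k+\U_{k-1}\bigl(\B^{(k-1)}-1\bigr)$, together with the umbral representation $B_n^{(k)}(z)=(z+\B^{(k)})^n$ of Theorem~\ref{thm2} and its companion $C_n^{(k)}(z)=(z+\C^{(k)})^n$ (which follows from $\C^{(k)}=\B^{(k)}-1$). Throughout I would use that $\B_k$, $\U_{k-1}$, and the umbrae building $\B^{(k-1)}$ (equivalently $\C^{(k-1)}$) are mutually independent, that $(z+\B_k)^m$ evaluates to the ordinary Bernoulli polynomial $B_m(z)$, that $\U_{k-1}^{\,j}$ evaluates to $\int_0^1 u^{j}\,du=1/(j+1)$, and that $(\B^{(k-1)})^{j}$ and $(\C^{(k-1)})^{j}$ evaluate to $B_j^{(k-1)}$ and $C_j^{(k-1)}$ respectively.

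For the companion recurrence I would start from $C_n^{(k)}(z)=(z+\B_k+\U_{k-1}\C^{(k-1)})^n$ and expand by the binomial theorem, treating $(z+\B_k)$ and $\U_{k-1}\C^{(k-1)}$ as the two blocks. Since these blocks are built from disjoint independent umbrae, the evaluation map factors across the product, sending $(z+\B_k)^m\mapsto B_m(z)$, $\U_{k-1}^{\,n-m}\mapsto 1/(n-m+1)$, and $(\C^{(k-1)})^{n-m}\mapsto C_{n-m}^{(k-1)}$; collecting the factor $\binom{n}{m}$ reproduces the stated formula at once.

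The Bernoulli recurrence is the genuinely more delicate one, and here the approach diverges. I would rewrite $z+\B^{(k)}=\bigl(z+1+\B_k-\U_{k-1}\bigr)+\U_{k-1}\B^{(k-1)}$, so that the second block now carries $\B^{(k-1)}$ rather than $\C^{(k-1)}$, as demanded by the target. The crucial point, and the main obstacle, is that the uniform umbra $\U_{k-1}$ now appears in \emph{both} blocks, so the factorization used above is illegitimate; one must instead freeze $\U_{k-1}=u$ as a single integration variable, evaluate the independent umbrae $\B_k$ and $\B^{(k-1)}$ first, and integrate over $u\in[0,1]$ only at the end. Expanding binomially and evaluating gives $(z+1+\B_k-u)^m\mapsto B_m(z+1-u)$ and $(\U_{k-1}\B^{(k-1)})^{n-m}\mapsto u^{n-m}B_{n-m}^{(k-1)}$, leaving $B_n^{(k)}(z)=\sum_{m=0}^n\binom{n}{m}B_{n-m}^{(k-1)}\int_0^1 B_m(z+1-u)\,u^{n-m}\,du$.

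It then remains to evaluate the inner integral, which I expect to be routine: the Bernoulli addition theorem gives $B_m(z+1-u)=\sum_{l=0}^m\binom{m}{l}(-1)^{m-l}B_l(z+1)\,u^{m-l}$, and then $\int_0^1 u^{m-l}u^{n-m}\,du=\int_0^1 u^{n-l}\,du=1/(n-l+1)$ reproduces exactly the inner double sum $\sum_{l=0}^m\binom{m}{l}(-1)^{m-l}B_l(z+1)/(n-l+1)$. The only real care needed is the bookkeeping that keeps $\U_{k-1}$ out of the factorization until after the integrand has been assembled; once that is respected, both identities follow from the evaluation rules.
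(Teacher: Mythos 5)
Your proposal is correct and follows essentially the same route as the paper: the same decomposition $z+\B^{(k)}=(z+1+\B_k-\U_{k-1})+\U_{k-1}\B^{(k-1)}$, the same two binomial expansions (your use of the Bernoulli addition theorem for $B_m(z+1-u)$ is exactly the paper's second binomial expansion of $(\B_k+z+1-\U_{k-1})^m$), and the same key step of combining the powers of $\U_{k-1}$ into $\U_{k-1}^{n-l}\mapsto 1/(n-l+1)$ before evaluating. Your explicit warning that $\U_{k-1}$ occurs in both blocks and must not be factored prematurely is precisely the point the paper handles via the rule $\U_{k-1}^{m-l}\U_{k-1}^{n-m}=\U_{k-1}^{n-l}$, and your direct derivation of the $C$-recurrence supplies the "identical proof" the paper omits.
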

\begin{proof}
Begin with the umbral representation (\ref{eq:umbral representation}). We then recursively write
\begin{align*}
B_{n}^{\left(k\right)}\left(z\right) & =\left(z+1+\mathcal{U}_{k-1}\mathcal{B}^{\left(k-1\right)}+\mathcal{B}_{k}-\mathcal{U}_{k-1}\right)^{n}\\
&= \sum_{m=0}^n \binom{n}{m}\left[\U_{k-1}\B^{(k-1)}\right]^{n-m}\left(\B_k+z+1 -\U_{k-1}\right)^m\\
 & =\sum_{m=0}^{n}\binom{n}{m}\left(\B^{(k-1)}\right)^{n-m}\sum_{l=0}^{m}\binom{m}{l}\left(\B_k+z+1\right)^l\left(-1\right)^{m-l}\U_{k-1}^{m-l}\U_{k-1}^{n-m}\\
 &\underset{\mathrm{(eval)}}{=}\sum_{m=0}^{n}\binom{n}{m}B_{n-m}^{\left(k-1\right)}\sum_{l=0}^{m}\binom{m}{l}\frac{\left(-1\right)^{m-l}B_{l}\left(z+1\right)}{n-l+1}.
\end{align*}
Note that the nonlinear $\B^{(k-1)}$ umbra is only dependent on $\{\B_1,\ldots,\B_{k-1}\}$ and $\{\U_1,\ldots,\U_{k-2}\}$, which means that we can apply the ``eval'' functional to it independently.
\end{proof}
\begin{rem}
This result appears as \cite[Thm 1.2]{hamahata} under the form
\[
B_{n}^{\left(k\right)}\left(z\right)=\sum_{m=0}^{n}\binom{n}{m}\left(-1\right)^{m}B_{n-m}^{\left(k-1\right)}\sum_{l=0}^{m}\binom{m}{l}\frac{\left(-1\right)^{l}}{n-l+1}B_{l}\left(z\right),
\]
which differs by the term $B_{l}\left(z\right)$ from our result and has been numerically verified as incorrect. 
\end{rem}

The next several results feature a nonlinear product of symbols of the form $\U \B^{(k-1)}$, which, to our knowledge, is the first time such a nonlinear function of symbols has appeared in the literature. It also features the negatively indexed symbol $\left(\B^{(k)}\right)^{-1}$, which is evaluated as the negative moment
$$
\left(z+\B^{(k)}\right)^{-1} = \left(\frac{\pi}{2}\right)^k \int_{\R^k} \prod_{l=1}^k \sech^2(\pi w_l)dw_l \int_{[0,1]^{k-1}} \prod_{l=1}^{k-1} du_l   \frac{1}{1+z+ \sum_{l=1}^k \left( iw_l-\frac12 \right)\prod_{j=l}^{k-1}u_j }.
$$
The evaluation of these negative moments will be explicitly described in the next section.

\begin{thm}
\label{thm7}
The poly-Bernoulli polynomials satisfy the connection relation
\[
B_{n}^{\left(k\right)}\left(z\right)=\frac{1}{n+1} \sum_{l=0}^n \binom{n+1}{l+1}B_{n-l}(z+1) B_l^{(k-1)}(-1)
\]
and
\[
C_{n}^{\left( k \right)}\left( z \right) = \frac{1}{n+1}\sum_{l=0}^{n}\binom{n+1}{l+1}B_{n-l}\left( z \right)C_{l}^{\left( k-1 \right)}.
\]
\end{thm}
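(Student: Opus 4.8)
The plan is to recycle the umbral decomposition already used in the previous recurrence, but to regroup its terms so that the uniform umbra $\U_{k-1}$ is isolated on one side. Starting from the symbolic recursion $\B^{(k)} = 1 + \B_k + \U_{k-1}\left(\B^{(k-1)}-1\right)$ and recalling $\C^{(k-1)} = \B^{(k-1)}-1$, I would write
$$z + \B^{(k)} = \left(z + 1 + \B_k\right) + \U_{k-1}\C^{(k-1)}.$$
The point of this grouping is that $z+1+\B_k$ involves only a single Bernoulli umbra, while the remaining term $\U_{k-1}\C^{(k-1)}$ collects the factor $\U_{k-1}$ that will eventually supply the harmonic weight $\frac{1}{l+1}$.

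Next I would binomially expand
$$B_n^{(k)}(z) = \left(z+\B^{(k)}\right)^n = \sum_{l=0}^n \binom{n}{l}\left(z+1+\B_k\right)^{n-l}\U_{k-1}^{\,l}\left(\C^{(k-1)}\right)^l,$$
and then apply the evaluation map term by term. As in the preceding proof, $\C^{(k-1)}$ (equivalently $\B^{(k-1)}$) depends only on $\{\B_1,\dots,\B_{k-1}\}$ and $\{\U_1,\dots,\U_{k-2}\}$, hence is independent of $\U_{k-1}$; this lets me factor $\E\left[\U_{k-1}^l \left(\C^{(k-1)}\right)^l\right] = \E\left[\U_{k-1}^l\right]\E\left[\left(\C^{(k-1)}\right)^l\right]$. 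The three evaluations are then routine: $\left(z+1+\B_k\right)^{n-l}\mapsto B_{n-l}(z+1)$, the uniform moment $\E\,\U_{k-1}^l = \int_0^1 u^l\,du = \frac{1}{l+1}$, and $\left(\C^{(k-1)}\right)^l = \left(\B^{(k-1)}-1\right)^l \mapsto B_l^{(k-1)}(-1)$. Collecting these gives $B_n^{(k)}(z) = \sum_{l=0}^n \binom{n}{l}\frac{1}{l+1}B_{n-l}(z+1)B_l^{(k-1)}(-1)$, and the stated form follows from the elementary identity $\frac{1}{l+1}\binom{n}{l} = \frac{1}{n+1}\binom{n+1}{l+1}$.

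For the companion relation I would repeat the argument verbatim with $\C$ in place of $\B$, using the recurrence $\C^{(k)} = \B_k + \U_{k-1}\C^{(k-1)}$ and the representation $C_n^{(k)}(z) = \left(z+\C^{(k)}\right)^n$. Grouping as $z + \C^{(k)} = (z+\B_k) + \U_{k-1}\C^{(k-1)}$, the same expansion and factorization yield $(z+\B_k)^{n-l}\mapsto B_{n-l}(z)$ (with no shift by $1$ now) and $\left(\C^{(k-1)}\right)^l \mapsto C_l^{(k-1)}$, giving the second identity after the same binomial simplification.

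The substantive step — and the only place any care is needed — is the factorization of the expectation, which hinges on recognizing that the nested umbra $\C^{(k-1)}$ is built entirely from symbols independent of $\U_{k-1}$; everything else is bookkeeping. The one genuine choice in the proof is the grouping of $z+\B^{(k)}$: isolating $\U_{k-1}$ (rather than keeping $\B_k - \U_{k-1}$ together as in the previous recurrence) is exactly what converts the uniform moment into the weight $\frac{1}{l+1}$, and hence into the binomial coefficient $\frac{1}{n+1}\binom{n+1}{l+1}$.
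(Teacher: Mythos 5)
Your proof is correct and rests on the same starting point as the paper's: the representation $B_n^{(k)}(z)=\bigl(1+z+\B_k+\U_{k-1}(\B^{(k-1)}-1)\bigr)^n$, the isolation of $\U_{k-1}$, and the observation that $\C^{(k-1)}$ is built only from $\{\B_1,\dots,\B_{k-1}\}$ and $\{\U_1,\dots,\U_{k-2}\}$ and so can be evaluated independently. Where you genuinely differ is the order of operations. The paper integrates $\U_{k-1}$ out of the full $n$-th power first, obtaining $\frac{(\C^{(k-1)})^{-1}}{n+1}\bigl[(1+z+\B_k+\C^{(k-1)})^{n+1}-(1+z+\B_k)^{n+1}\bigr]$, which forces it to invoke the negatively indexed symbol $(\B^{(k-1)}-1)^{-1}$ (flagged as a novelty just before the theorem) and then to re-expand a difference of $(n+1)$-st powers. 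You expand binomially first, so only the positive uniform moments $\E\,\U_{k-1}^{\,l}=\frac{1}{l+1}$ ever appear, and the weight $\frac{1}{n+1}\binom{n+1}{l+1}$ comes from the elementary identity $\frac{1}{l+1}\binom{n}{l}=\frac{1}{n+1}\binom{n+1}{l+1}$. Your route is the more elementary of the two, since it avoids negative moments entirely; the paper's route has the advantage of setting up the machinery that is reused for the negative-moment and zeta-function computations in Section 5. Your treatment of the companion identity for $C_n^{(k)}(z)$ is likewise correct.
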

\begin{proof}
Begin with the umbral representation (\ref{eq:umbral representation}),
$$B_n^{(k)}(z) = \left(1+z+\B_k +\U_{k-1}\left(\B^{(k-1)}-1 \right)\right)^n, $$
and directly apply the $\U_{k-1}$ integration operator to obtain
\begin{align*}
B_n^{(k)}(z) &= \frac{\left(\B^{(k-1)}-1\right)^{-1}}{n+1}\left[\left(1+z+\B_k +\B^{(k-1)}-1\right)^{n+1} - \left(1+z+\B_k\right)^{n+1}\right] \\
&= \frac{1}{n+1} \sum_{l=1}^{n+1} \binom{n+1}{l} (1+z+\B_k)^{n+1-l} \left(\B^{(k-1)} -1 \right)^{l-1} \\
&= \frac{1}{n+1} \sum_{l=0}^n \binom{n+1}{l+1}B_{n-l}(z+1) B_l^{(k-1)}(-1).
\end{align*}
\end{proof}

\begin{rem}
The Fourier coefficients of the periodic poly-Bernoulli polynomials $C_{n}^{\left( k \right)} \left( \left\{ z\right\}  \right)$, defined by  
\[
c_{l}^{\left( k,n \right)} = \int_{0}^{1} C_{n}^{\left( k \right)} \left( \left\{ z\right\}  \right) e^{-\imath 2 \pi lz} dz,
\]
are computed in \cite{Kim} as
\[
c_{l}^{\left( k,n \right)} = 
\begin{cases}
-\frac{n!}{\left(\imath2\pi l\right)^{n+1}}\sum_{p=1}^{n}\frac{C_{p-1}^{\left(k-1\right)}}{p!}\left(\imath2\pi l\right)^{p} & l \ne 0\\
\frac{1}{n+1}C_{n}^{\left(k-1\right)} & l=0.
\end{cases}
\]
Thus the Fourier expansion of the periodic poly-Bernoulli polynomials $C_{n}^{\left( k \right)} \left( \left\{ z\right\}  \right)$ reads
\[
C_{n}^{\left(k\right)}\left(z\right)=\frac{1}{n+1}\sum_{p=1}^{n+1}\binom{n+1}{p}C_{p-1}^{\left(k-1\right)}B_{n-p+1}\left(\left\{ z\right\} \right).
\]
As a consequence, both identities in Theorem \ref{thm7} can be interpreted as the Fourier expansion of the poly-Bernoulli polynomials $B_{n}^{\left( k \right)} \left( \left\{ z\right\}  \right)$ and $C_{n}^{\left( k \right)} \left( \left\{ z\right\}  \right)$.
\end{rem}

The next result is a higher-order analog of the identity on Bernoulli symbols $f(1+\B) = f(\B)+f'(0)$, for smooth analytic functions $f$.
\begin{thm}
The poly-Bernoulli polynomials satisfy the difference identities
\[
B_{n}^{\left(k\right)}\left(z\right)-B_{n}^{\left(k\right)}\left(z-1\right)= \sum_{l=1}^n \binom{n}{l} z^{n-l}B^{(k-1)}_{l-1}(-1).
\]
and
\[
C_{n}^{\left(k\right)}\left(z+1\right)-C_{n}^{\left(k\right)}\left(z\right)= \sum_{l=1}^{n} \binom{n}{l} z^{n-l}C^{(k-1)}_{l-1}.
\]
\end{thm}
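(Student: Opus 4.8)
The plan is to isolate the innermost Bernoulli umbra $\B_k$ and reduce both difference identities to a single application of the scalar Bernoulli identity $\E f(1+\B) = \E f(\B) + f'(0)$ highlighted in the remark preceding the statement. First I would invoke the recursion $\C^{(k)} = \B_k + \U_{k-1}\C^{(k-1)}$ together with $\B^{(k)} = 1 + \C^{(k)}$, so that the umbral representations become $B_n^{(k)}(z) = (z + 1 + \B_k + \U_{k-1}\C^{(k-1)})^n$ and $C_n^{(k)}(z) = (z + \B_k + \U_{k-1}\C^{(k-1)})^n$. Writing $W := \U_{k-1}\C^{(k-1)}$, the key structural observation is that $\B_k$ is independent of $W$, since $\C^{(k-1)}$ involves only $\B_1,\dots,\B_{k-1}$ and $\U_1,\dots,\U_{k-2}$.

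Next, for fixed $W$ I set $f(x) := (z + x + W)^n$. Then both $B_n^{(k)}(z) - B_n^{(k)}(z-1)$ and $C_n^{(k)}(z+1) - C_n^{(k)}(z)$ equal $\E\bigl[f(1+\B_k) - f(\B_k)\bigr]$; here the extra $+1$ in $\B^{(k)} = 1 + \C^{(k)}$ is exactly what pairs the shift $z \mapsto z-1$ with the first identity against $z \mapsto z$ with the second. Applying the scalar Bernoulli identity to the $\B_k$ integration, with the remaining umbrae held fixed, collapses this to $f'(0) = n(z+W)^{n-1}$, so that both left-hand sides equal the single moment $n\,(z + \U_{k-1}\C^{(k-1)})^{n-1}$ umbrally.

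It then remains only to expand this moment. I would binomially expand $n(z+W)^{n-1} = n\sum_{j=0}^{n-1}\binom{n-1}{j} z^{n-1-j}\,(\U_{k-1}\C^{(k-1)})^j$, factor the expectation using the independence of $\U_{k-1}$ and $\C^{(k-1)}$ into $\E\,\U_{k-1}^j = 1/(j+1)$ and $\E\,(\C^{(k-1)})^j = C_j^{(k-1)}$, and apply the elementary identity $\tfrac{n}{j+1}\binom{n-1}{j} = \binom{n}{j+1}$. Reindexing $l = j+1$ yields $\sum_{l=1}^n \binom{n}{l} z^{n-l} C_{l-1}^{(k-1)}$, which is the $C$-identity verbatim; for the $B$-identity I would finally rewrite $C_{l-1}^{(k-1)} = (\C^{(k-1)})^{l-1} = (\B^{(k-1)}-1)^{l-1} = B_{l-1}^{(k-1)}(-1)$.

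The one point requiring care, rather than a genuine obstacle, is the legitimacy of applying the one-variable Bernoulli difference identity to $\B_k$ alone while the remaining symbols ride along as a constant $W$. This is justified exactly as in the linearity-of-expectation discussion of Section \ref{sec2}: because $\B_k$ is independent of $\U_{k-1}\C^{(k-1)}$, one may integrate out $\B_k$ first and treat $W$ as a parameter, and the generating-function (or Taylor) argument gives $\E\,g(1+\B_k)-\E\,g(\B_k)=g'(0)$ for any smooth $g$ since $B_m(1)-B_m(0)=m\,0^{m-1}$. The binomial bookkeeping and the final identification $C_{l-1}^{(k-1)} = B_{l-1}^{(k-1)}(-1)$ are then routine.
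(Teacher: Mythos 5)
Your proof is correct and follows essentially the same route as the paper: both isolate the outermost umbra $\B_k$, apply the scalar identity $f(1+\B)=f(\B)+f'(0)$ to produce the single term $n\left(z+\U_{k-1}(\B^{(k-1)}-1)\right)^{n-1}$, and then integrate out $\U_{k-1}$. The only (cosmetic) difference is in that last step: the paper evaluates the $\U_{k-1}$ integral in closed form via the negative-index symbol $\left(\B^{(k-1)}-1\right)^{-1}$ before expanding binomially, whereas you expand binomially first and evaluate $\E\,\U_{k-1}^{j}=1/(j+1)$ termwise --- an equivalent bookkeeping that avoids invoking negative moments altogether.
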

\begin{proof}
We begin by applying the classical relation $f(1+\B) = f(\B)+f'(0)$ \cite{Gessel} to the outermost $\B_k$ symbol, then utilize the trick from the previous theorem and directly apply the $\U_{k-1}$ integration operator and invoke symbols at negative indices:
\begin{align*}
B_n^{(k)}(z) &= \left(1+z+\B_k +\U_{k-1} \left( \B^{(k-1)}-1\right)\right)^n \\
&= \left(z+\B_k +\U_{k-1}\left( \B^{(k-1)}-1\right)\right)^n +n\left(z +\U_{k-1}\left( \B^{(k-1)}-1\right)\right)^{n-1} \\
&= \left(z-1+\B^{(k)}\right)^n + \left(\B^{(k-1)}-1\right)^{-1} \left[ \left(z-1+\B^{(k-1)}\right)^n - z^n \right] \\
&= \left(z-1+\B^{(k)}\right)^n + \sum_{l=1}^n \binom{n}{l} z^{n-l}\left(\B^{(k-1)}-1\right)^{l-1}.
\end{align*}
Applying the eval functional completes the proof.
\end{proof}
This identity can be extended to all smooth analytic functions by linearity, since we've verified it for monomials. For $k \geq 2$, this gives the symbolic extension
$$f(1+z+\B^{(k)})-f(z+\B^{(k)}) = \frac{f(z+\B^{(k-1)})-f(z)}{\B^{(k-1)}}, $$
which converts a forwards difference in $z$ into a discrete derivative with respect to $\B^{(k-1)}$. Note that for $k=1$ we instead have $f(1+z+\B)-f(z+\B) = f'(z)$, so that as $k \to 1$ we're performing a discrete approximation to the continuous derivative $f'(z)$, with respect to $\B^{(k-1)}$. This suggests that other functional identities involving the Bernoulli symbol and derivatives ought to have generalizations to the poly-Bernoulli numbers.

Analogously the Bernoulli symbol identity \cite{Gessel}
\[
f(m+\B) = f(\B) + \sum_{i=0}^{m-1}f'(i)
\] 
extends to the poly-Bernoulli case as follows.
\begin{thm}
The poly-Bernoulli polynomials satisfy the higher-order difference identity
\[
B_{n}^{\left(k\right)}\left(m+z\right)-B_{n}^{\left(k\right)}\left(z\right)=\sum_{l=1}^n \binom{n}{l} B^{(k-1)}_{l-1}(-1) \left( \frac{B_{n-l+1}(m+z+1)-B_{n-l+1}(z+1)}{n-l+1}\right)
\]
and
\[
C_{n}^{\left(k\right)}\left(m+z\right)-C_{n}^{\left(k\right)}\left(z\right)=\sum_{l=1}^n \binom{n}{l} C^{(k-1)}_{l-1} \left( \frac{B_{n-l+1}(m+z)-B_{n-l+1}(z)}{n-l+1}\right).
\]
\end{thm}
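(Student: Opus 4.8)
The plan is to mirror the proof of the preceding single-step difference identity, replacing the scalar rule $f(1+\B)=f(\B)+f'(0)$ by its $m$-step generalization $f(m+\B)=f(\B)+\sum_{i=0}^{m-1}f'(i)$, and then collapsing the resulting power sum into a difference of Bernoulli polynomials. Starting from the umbral representation \eqref{eq:umbral representation}, I would write $B_{n}^{(k)}(m+z)=\left(m+1+z+\B_k+\U_{k-1}(\B^{(k-1)}-1)\right)^n$ and isolate the outermost symbol $\B_k$, which by Definition \ref{polysymdef} is independent of the sub-umbra $\B^{(k-1)}$ and of $\U_{k-1}$. Treating everything but $\B_k$ as a constant coefficient, the $m$-step identity applied to $\B_k$ gives
\[
B_{n}^{(k)}(m+z)-B_{n}^{(k)}(z)=\sum_{i=0}^{m-1} n\left(1+z+i+\U_{k-1}(\B^{(k-1)}-1)\right)^{n-1}.
\]

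Next I would apply the $\U_{k-1}$ integration operator to each summand exactly as in the previous proof: with $a=\B^{(k-1)}-1$, the elementary computation $n\int_0^1(1+z+i+ua)^{n-1}\,du=\frac{(z+i+\B^{(k-1)})^n-(z+i+1)^n}{a}$ followed by the binomial expansion $\frac{(c+a)^n-c^n}{a}=\sum_{l=1}^n\binom{n}{l}c^{\,n-l}a^{\,l-1}$ with $c=z+i+1$ turns each summand into $\sum_{l=1}^n\binom{n}{l}(z+1+i)^{n-l}(\B^{(k-1)}-1)^{l-1}$. Because $\B^{(k-1)}$ depends only on $\{\B_1,\dots,\B_{k-1}\}$ and $\{\U_1,\dots,\U_{k-2}\}$, the eval functional may be applied to it independently, sending $(\B^{(k-1)}-1)^{l-1}\mapsto B^{(k-1)}_{l-1}(-1)$.

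Interchanging the two finite sums then leaves the inner power sum $\sum_{i=0}^{m-1}(z+1+i)^{n-l}$, and the last step is the Faulhaber collapse $\sum_{i=0}^{m-1}(x+i)^p=\frac{B_{p+1}(x+m)-B_{p+1}(x)}{p+1}$ with $x=z+1$ and $p=n-l$, which produces exactly the factor $\frac{B_{n-l+1}(m+z+1)-B_{n-l+1}(z+1)}{n-l+1}$ and hence the claimed $B$-identity. The companion $C$-statement follows by the identical argument starting from $C_{n}^{(k)}(z)=(z+\C^{(k)})^n=(z+\B_k+\U_{k-1}\C^{(k-1)})^n$; the only change is that the absence of the constant $1$ in $\C^{(k)}$ replaces $z+1$ by $z$ throughout, so that eval sends $(\C^{(k-1)})^{l-1}\mapsto C^{(k-1)}_{l-1}$ and the Faulhaber step uses $x=z$. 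As a cross-check, one can instead telescope the single-step identity of the previous theorem over $w=z+1,\dots,z+m$, which reproduces the same double sum.

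I expect no serious obstacle: the conceptual point---that the scalar $m$-step rule may be applied to the independent outer umbra $\B_k$ while the nonlinear coefficient is held fixed, and that eval acts independently on $\B^{(k-1)}$---is precisely the mechanism already exploited in the preceding theorems. The only genuinely new ingredient is the Faulhaber summation that converts the $i$-sum into a Bernoulli-polynomial difference, and the main care-point is the bookkeeping of the $+1$ shift that distinguishes the $B$ case ($z+1$) from the $C$ case ($z$).
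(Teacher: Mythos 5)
Your proposal is correct and follows essentially the same route as the paper: apply the $m$-step rule $f(m+\B)=f(\B)+\sum_{i=0}^{m-1}f'(i)$ to the outer umbra $\B_k$, integrate out $\U_{k-1}$ to produce the divided difference $\bigl(\B^{(k-1)}-1\bigr)^{-1}\bigl[(i+z+\B^{(k-1)})^n-(i+z+1)^n\bigr]$, expand binomially, evaluate the independent sub-umbra, and collapse the $i$-sum by the generalized Faulhaber formula. The only (harmless) addition is that you spell out the $C$-case bookkeeping, which the paper omits as an identical argument.
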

\begin{proof}
We begin by applying the classical relation $f(m+\B) = f(\B) + \sum_{i=0}^{m-1}f'(i)$:
\begin{align*}
B_n^{(k)}(m+z) &= \left(m+1+z+\B_k +\U_{k-1}\left( \B^{(k-1)}-1\right)\right)^n \\
&= \left(1+z+\B_k +\U_{k-1}\left( \B^{(k-1)}-1\right)\right)^n  +n\sum_{i=0}^{m-1}\left(i+1+z +\U_{k-1}\left( \B^{(k-1)}-1\right)\right)^{n-1} \\
&= \left(z+\B^{(k)}\right)^n  +\left(\B^{(k-1)}-1\right)^{-1}\sum_{i=0}^{m-1}\left[\left(i+1+z +\B^{(k-1)}-1\right)^{n} - \left(i+1+z \right)^{n} \right] \\
&= \left(z+\B^{(k)}\right)^n  + \sum_{i=0}^{m-1}\sum_{l=1}^n \binom{n}{l}(1+z+i)^{n-l}\left(\B^{(k-1)}-1\right)^{l-1}.
\end{align*}
We now apply the generalized Faulhaber formula
$$ \sum_{i=0}^{m-1} (1+z+i)^{n-l} = \frac{B_{n-l+1}(m+z+1)-B_{n-l+1}(z+1)}{n-l+1} $$
to eliminate the outer summation, which completes the proof.

\end{proof}

\section{a new approach to MZVs}\label{sec5}
A consequence of the symbolic representation \eqref{symbol B} is that it allows us to represent the analytic continuation of the Arakawa zeta function as a Bernoulli symbol to a negative power. The easiest case of this result is the classical representation 
\[
\zeta\left( n \right) = \left( -1 \right)^n
\frac{B_{1-n}}{1-n},\,\, n\ge 2,
\]
which should be understood as a consequence of the integral representation
\eqref{bernint}
\[
\zeta\left( n \right) = 
\frac{\left( -1 \right)^n}{1-n}
\frac{\pi}{2}\int_{-\infty}^\infty \left( \imath t-\frac{1}{2} 
\right)^{ 1-n} \sech^2\left(  \pi t \right) dt.
\]

Remark that Theorem \ref{negmoments} essentially computes the negative moments $(\B^{(k)})^{-m}$ . For $k=1$, these negative moments can be separately evaluated; we begin with the nonlinear umbral evaluation \cite[Thm 2.5]{Dixit}
\begin{equation}
\label{1/B}
\log (\B+z) = \psi\left( \frac12+ \bigg| z-\frac12 \bigg|\right), \end{equation}
where $\psi(z) := \frac{\Gamma'(z)}{\Gamma(z)}$ is the digamma function. Then, by noting $(\B+z)^{-1} = \frac{d}{dz} \log(\B+z) $, and taking iterated derivatives, we arrive at the evaluation
\begin{equation}
\label{psi}
(\B+z)^{-m}  = 
\begin{cases}
\frac{(-1)^{m-1}}{(m-1)!} \psi^{(m)}(z), &z>\frac12 \\
-\frac{1}{(m-1)!} \psi^{(m)}(1-z), &z<\frac12 
\end{cases},
\end{equation}
which is nondifferentiable at $z=\frac12$. Here, $\psi^{(m)}$ is the $m$--th order polygamma function. At  $z=1$, this specializes to $(\B+1)^{-m} = m\zeta(m+1)$.

Before proving a new symbolic representation for the Arakawa zeta function, we introduce some lemmas about the \textit{Barnes zeta function}, a generalization of the Riemann zeta function parametrized by a vector of $k$ real variables $(a_1,a_2,\ldots,a_k)$ as
$$\zeta_k(s,z| a_1,\ldots,a_k) :=  \sum_{m_1,\ldots,m_k \geq 0} \frac{1}{(z+a_1m_1+\cdots + a_km_k)^s}.$$
Much of the material from this section is derived from the important manuscript \cite{Ruijsenaars} of Ruijsenaars, which we highly recommend. The following integral representations are crucial to our argument.
\begin{lem}\label{lem1}\cite[Eqn 1.6]{Ruijsenaars}
With values of the parameters such that both sides converge, we have the iterated integral 
\begin{equation}\label{eq2}
\zeta_k\left(s,w+\frac12\sum_{i=1}^k a_i\right) = \left(\frac{\pi}{2}\right)^k \frac{\Gamma(s-k)}{\Gamma(s)}\int_{{\R}^k} \prod_{n=1}^k \frac{1}{a_n^2}\sech^2\left(\frac{\pi u_n}{a_n}\right)  \frac{1}{\left(w - i\sum_{n=1}^ku_n\right)^{s-k}}dx_1\cdots dx_k. 
\end{equation}
\end{lem}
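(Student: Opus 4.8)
The plan is to reduce \eqref{eq2} to the classical Mellin integral for the Barnes zeta function and then insert the secant-square representation \eqref{bernint} into each factor. First I would set $z = w + \frac12\sum_{i=1}^k a_i$ and use the Gamma integral $x^{-s} = \frac{1}{\Gamma(s)}\int_0^\infty t^{s-1}e^{-xt}\,dt$ to turn each summand of the Barnes sum into an exponential. Summing the resulting geometric series in each index $m_n$ factorizes the integrand and gives
\[
\zeta_k\!\left(s,\, w+\tfrac12\sum_{i=1}^k a_i\right) = \frac{1}{\Gamma(s)}\int_0^\infty t^{s-1} e^{-wt}\prod_{n=1}^k \frac{e^{-a_n t/2}}{1-e^{-a_n t}}\,dt = \frac{1}{\Gamma(s)}\int_0^\infty t^{s-1} e^{-wt}\prod_{n=1}^k \frac{1}{2\sinh(a_n t/2)}\,dt,
\]
where the half-sum shift is exactly what symmetrizes each factor into a hyperbolic sine.

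The second step is to replace each factor $1/(2\sinh(a_n t/2))$ by a Fourier integral against the secant-square density. From \eqref{bernint} the Bernoulli umbra satisfies $\E\, e^{t\B} = t/(e^t-1)$, and since $\B = i\mathcal{L}-\frac12$ with $\mathcal{L}\sim\frac{\pi}{2}\sech^2(\pi u)$ this is equivalent to the characteristic-function identity
\[
\frac{t}{2\sinh(t/2)} = \frac{\pi}{2}\int_{-\infty}^\infty e^{itu}\sech^2(\pi u)\,du.
\]
Rescaling $u\mapsto u/a_n$ then yields
\[
\frac{1}{2\sinh(a_n t/2)} = \frac{\pi}{2}\,\frac{1}{a_n^2 t}\int_{-\infty}^\infty e^{itu_n}\sech^2\!\left(\frac{\pi u_n}{a_n}\right)du_n,
\]
so taking the product over $n$ produces the constant $(\pi/2)^k\bigl(t^k\prod_n a_n^2\bigr)^{-1}$ together with a single exponential $e^{it\sum_n u_n}$ and the product of sech-squared kernels.

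Finally I would substitute this product into the Mellin integral and interchange the $u$- and $t$-integrations. The $t$-integral then collapses to another Gamma integral,
\[
\int_0^\infty t^{s-k-1} e^{-\left(w - i\sum_n u_n\right)t}\,dt = \frac{\Gamma(s-k)}{\left(w - i\sum_{n=1}^k u_n\right)^{s-k}},
\]
and collecting the prefactors $(\pi/2)^k$, $\prod_n a_n^{-2}$, and $\Gamma(s-k)/\Gamma(s)$ reproduces \eqref{eq2} exactly (with the measure written as $du_1\cdots du_k$).

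The main obstacle is purely analytic: justifying both the termwise summation of the geometric series under the integral sign and the Fubini interchange of the $u$- and $t$-integrals. The $u$-integrals converge absolutely because $\sech^2$ decays exponentially, while the $t$-integral converges at $t=0$ once $\mathrm{Re}(s)>k$ (making $t^{s-k-1}$ integrable there) and at $t=\infty$ once $\mathrm{Re}(w)>0$ and each $a_n>0$. These are precisely the restrictions packaged in the hypothesis ``values of the parameters such that both sides converge,'' and within that region dominated convergence legitimizes every interchange; the identity then extends to the full domain by analytic continuation in $s$.
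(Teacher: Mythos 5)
Your derivation is correct. Note that the paper itself offers no proof of this lemma --- it is quoted verbatim from Ruijsenaars \cite[Eqn 1.6]{Ruijsenaars} --- so there is nothing internal to compare against; what you have written is a valid self-contained proof of the cited result, and it follows essentially the same route Ruijsenaars does. All three ingredients check out: the half-sum shift $z = w + \frac12\sum_i a_i$ does turn each factor $e^{-a_n t/2}/(1-e^{-a_n t})$ into $1/(2\sinh(a_n t/2))$; the Fourier identity $\frac{t}{2\sinh(t/2)} = \frac{\pi}{2}\int_{\R} e^{itu}\sech^2(\pi u)\,du$ is exactly the characteristic-function form of \eqref{bernint}, and your rescaling correctly produces the $a_n^{-2}$ factors; and the final Gamma integral assembles the prefactor $\Gamma(s-k)/\Gamma(s)$. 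Your convergence discussion ($\mathrm{Re}(s)>k$, $\mathrm{Re}(w)>0$, $a_n>0$, absolute convergence justifying Tonelli/Fubini, then analytic continuation in $s$) is the right way to make the interchanges rigorous. Incidentally, your computation confirms that the measure in \eqref{eq2} should read $du_1\cdots du_k$ rather than $dx_1\cdots dx_k$; that is a typo in the paper's transcription of the lemma.
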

\begin{lem}\label{lem2}\cite[Eqn 3.2]{Ruijsenaars}
With values of the parameters such that both sides converge, we have the Barnes-Mellin transform
\begin{equation}
\zeta_k(s,w)=\frac{1}{\Gamma(s)}\int_0^\infty  \prod_{j=1}^k \frac{1}{1-e^{-a_jt}}t^{s-1} e^{-wt}dt.
\end{equation}
\end{lem}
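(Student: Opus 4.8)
The plan is to reduce the defining Barnes series to a product of geometric series by means of the standard Gamma-function integral. First I would invoke the elementary identity
\[
\lambda^{-s} = \frac{1}{\Gamma(s)}\int_0^\infty t^{s-1}e^{-\lambda t}\,dt,
\]
valid for $\Re(\lambda)>0$ and $\Re(s)>0$, and apply it termwise with $\lambda = w + a_1 m_1 + \cdots + a_k m_k$. This rewrites the definition
\[
\zeta_k(s,w) = \sum_{m_1,\ldots,m_k \ge 0}\frac{1}{(w+a_1m_1+\cdots+a_km_k)^s}
\]
as a $k$-fold sum of such integrals, all sharing the common factor $t^{s-1}e^{-wt}$.

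The key step is then to interchange the summation over $m_1,\ldots,m_k$ with the integration and to recognize the resulting inner sum as a product. Once $t^{s-1}e^{-wt}$ is pulled out, the remaining sum factorizes as
\[
\sum_{m_1,\ldots,m_k\ge 0} e^{-(a_1 m_1 + \cdots + a_k m_k)t}
= \prod_{j=1}^k \sum_{m_j\ge 0} e^{-a_j m_j t}
= \prod_{j=1}^k \frac{1}{1-e^{-a_j t}},
\]
where each factor is the geometric series in $e^{-a_j t}$, convergent because $0<e^{-a_j t}<1$ for $t>0$ and $a_j>0$. Substituting this back under the integral sign produces exactly the claimed Mellin representation $\zeta_k(s,w)=\frac{1}{\Gamma(s)}\int_0^\infty \prod_{j=1}^k (1-e^{-a_j t})^{-1}\,t^{s-1}e^{-wt}\,dt$.

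The main obstacle, and essentially the only point requiring care, is justifying the interchange of sum and integral. Under the stated hypothesis that both sides converge (concretely $\Re(s)>k$, $w>0$, and all $a_j>0$), every term of the integrand is nonnegative after replacing $s$ by $\Re(s)$, so Tonelli's theorem applies directly; the absolute convergence established this way then licenses the interchange for complex $s$ by dominated convergence. I would also record the domain restriction $\Re(s)>k$ needed for convergence of the final integral near $t=0$, where $\prod_{j=1}^k(1-e^{-a_j t})^{-1}$ behaves like $(a_1\cdots a_k)^{-1}t^{-k}$ and hence the integrand behaves like $t^{\,\Re(s)-1-k}$, precisely the condition under which Lemma~\ref{lem1} is also stated.
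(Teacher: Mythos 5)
Your proposal is correct: the termwise application of the Gamma integral, the Tonelli/dominated-convergence justification of the interchange, and the factorization into geometric series constitute the standard derivation of this Mellin representation, and your convergence condition $\Re(s)>k$ is the right one. Note that the paper does not prove this lemma at all --- it simply cites Ruijsenaars (Eqn.\ 3.2) --- so your argument supplies a self-contained proof of an imported result, and it matches the classical derivation found in that reference.
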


The Barnes zeta function satisfies a typical zeta function--Bernoulli number duality; given the Bernoulli--Barnes numbers defined before Theorem \ref{thm3}, we have the identity on analytic continuations
$$\zeta_k(-N,z|a_1,\ldots,a_k)  = (-1)^k \frac{N!}{(n+N)!} \mathfrak{B}_{n+N}^{\left(k\right)}\left(a_{1},\dots,a_{k};z\right).$$
Therefore, we heuristically expect Theorem \ref{thm3} to have an analog for the Arakawa zeta, by mapping $s \mapsto -N$. This is how we proceed; we write the Arakawa zeta function as an integral transform of the Barnes zeta function, and then apply some known results for the Barnes zeta.

\begin{thm}\label{negmoments}
A symbolic representation for the Arakawa zeta function is
\begin{equation}
\label{symbolic representation}
 \zeta_k\left( m \right) = \left(-1\right)^{m}\C^{\left( k \right)}_{-m}.
\end{equation}
\end{thm}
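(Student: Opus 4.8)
The plan is to feed the integral representation \eqref{akdef} into the moment interpretation of the companion symbol $\C^{(k)}$ and then trade the Mellin variable $t$ for a negative power via the Gamma integral. I would begin from
\[
\zeta_k(m)=\frac{1}{\Gamma(m)}\int_0^\infty t^{m-1}\,\frac{\Li_k(1-e^{-t})}{e^{t}-1}\,dt
\]
and identify the kernel as the moment generating function of $\C^{(k)}$. Indeed, \eqref{polydefC} together with the moment identity $C_n^{(k)}=(\C^{(k)})^n$ yields $\frac{\Li_k(1-e^{-t})}{e^{t}-1}=\E\,e^{t\C^{(k)}}$; a priori this is only a formal identity of power series near $t=0$, but both sides are real-analytic on $(0,\infty)$, so it persists for all $t>0$.

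The second step is to interchange the $t$-integration with the expectation. Writing out the realization from Theorem \ref{thm2}, where $\B_l\mapsto iw_l-\tfrac12$ and $\U_j\mapsto u_j\in[0,1]$, every realization of $\C^{(k)}=\sum_{l=1}^{k}\B_l\prod_{j=l}^{k-1}\U_j$ satisfies
\[
\Re\,\C^{(k)}=-\tfrac12\sum_{l=1}^{k}\prod_{j=l}^{k-1}u_j\le-\tfrac12,
\]
because the $l=k$ summand contributes exactly $-\tfrac12$ while the others are nonpositive. Hence $|e^{t\C^{(k)}}|\le e^{-t/2}$ pointwise, and since the underlying measure (a product of the probability densities $\tfrac{\pi}{2}\sech^2$ and the uniform densities on $[0,1]$) has total mass one, Tonelli's theorem applies after the bound $\int_0^\infty t^{m-1}e^{-t/2}\,dt=2^m\Gamma(m)<\infty$. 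This also secures convergence of the MGF used above. I would then obtain
\[
\zeta_k(m)=\frac{1}{\Gamma(m)}\,\E\int_0^\infty t^{m-1}e^{t\C^{(k)}}\,dt.
\]

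The third step evaluates the inner integral. Since $\Re\,\C^{(k)}<0$, the standard substitution gives $\int_0^\infty t^{m-1}e^{t\C^{(k)}}\,dt=\Gamma(m)\,(-\C^{(k)})^{-m}$, where for integer $m$ the power $(-\C^{(k)})^{-m}=(-1)^m(\C^{(k)})^{-m}$ is unambiguous. Substituting back,
\[
\zeta_k(m)=\E\,(-\C^{(k)})^{-m}=(-1)^m\,\E\,(\C^{(k)})^{-m}=(-1)^m\,\C^{(k)}_{-m},
\]
which is the asserted representation, the negative moment $\C^{(k)}_{-m}:=\E(\C^{(k)})^{-m}$ being the object whose $k=1$ evaluation is recorded in \eqref{psi}.

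The main obstacle I anticipate is the pair of analytic justifications: confirming that the formal identity $\frac{\Li_k(1-e^{-t})}{e^{t}-1}=\E\,e^{t\C^{(k)}}$ holds on all of $(0,\infty)$ (rather than only in the disk of convergence of the poly-Bernoulli generating series), and that the interchange of integral and expectation is licit. Both rest on the single uniform estimate $\Re\,\C^{(k)}\le-\tfrac12$, which is exactly what forces the half-line Mellin integral to converge and makes the negative moments finite. As a consistency check, for $k=1$ one has $\C^{(1)}=\B$, and \eqref{psi} gives $\B^{-m}=(-1)^m m\,\zeta(m+1)$, so $(-1)^m\B^{-m}=m\,\zeta(m+1)=\zeta_1(m)$, in agreement with $\Li_1(1-e^{-t})=t$ in \eqref{akdef}.
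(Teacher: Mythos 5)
Your argument is correct, and it reaches the identity by a genuinely more direct route than the paper. You start from the Mellin-transform definition \eqref{akdef}, identify the kernel $\Li_k(1-e^{-t})/(e^t-1)$ with the moment generating function $\E\, e^{t\C^{(k)}}$, and then integrate term by term via the Gamma integral, with everything controlled by the single pointwise bound $\Re\,\C^{(k)}\le -\tfrac12$ on every realization of the symbol; that bound is exactly the right observation, since it extends the MGF identity from the disk of convergence to all of $(0,\infty)$, licenses the interchange of expectation and $t$-integration, and guarantees that the negative moments exist. The paper instead derives the same ``Mellin transform equals negative moment'' identity from Ruijsenaars' two integral representations of the Barnes zeta function (Lemmas \ref{lem1} and \ref{lem2}): specializing the Barnes parameters to $a_k=-1$, $a_{k-i}=-u_{k-1}\cdots u_{k-i}$ and then integrating the $u_i$ over $[0,1]$ exhibits the Arakawa--Kaneko zeta function as a $(k-1)$-fold integral transform of the Barnes zeta function, from which \eqref{symbolic representation} follows at $w=0$. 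Your approach is self-contained and makes the analytic hypotheses explicit; the paper's approach buys, as a by-product, the integral-transform relation between the Arakawa--Kaneko and Barnes zeta functions, which is the analytic analogue of Theorem \ref{thm3}. Two small points to tighten: for the complex-valued integrand you should apply Tonelli only to $\left| t^{m-1}e^{t\C^{(k)}}\right|$ and then invoke Fubini for the interchange itself; and the identification $\E\, e^{t\C^{(k)}}=\sum_{n}C_n^{(k)}t^n/n!$ near $t=0$ already requires exchanging the expectation with the Taylor series, which is justified because the $\sech^2$ density has exponential moments of every order below $2\pi$.
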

\begin{proof}
Heuristically, the idea behind this identity is identifying the Arakawa zeta as a Barnes-Mellin transform 
\begin{align*}
\zeta_{k}\left(m,w\right) & =\frac{1}{\Gamma\left(m\right)}\int_{0}^{+\infty}t^{m-1}e^{-wt}\frac{\Li_{k}\left(1-e^{-t}\right)}{e^{t}-1}dt\\
 & =\frac{1}{\Gamma\left(m\right)}\int_{0}^{+\infty}t^{m-1}e^{-wt}e^{t\C^{\left(k\right)}}dt\\
 & =\frac{1}{\Gamma\left(m\right)}\int_{0}^{+\infty}t^{m-1}e^{-t\left(w-\C^{\left(k\right)}\right)}dt\\
 & =\left(w-\C^{\left(k\right)}\right)^{-m}.
\end{align*}
However, we need to make this approach rigorous. 
Denote by
\[
f_{L}\left(x\right)=\frac{\pi}{2}\sech^{2}\left(\pi x\right)
\]
the square hyperbolic secant distribution. Then, by combining Lemmas \ref{lem1} and \ref{lem2}, with the change of variables $u_n \mapsto a_n x_n$ on the left-hand side, we obtain

\[
\left(\prod_{i=1}^{k}\frac{1}{\left(m-i\right)a_{i}}\right)\int_{\mathbb{R}^{k}}\frac{\prod_{i=1}^{k}f_{L}\left(x_{i}\right)}{\left(w+\sum_{i=1}^{k}a_{i}\left(\imath x_{i}-\frac{1}{2}\right)\right)^{m-k}}dx_{1}\dots dx_{k}=\frac{1}{\Gamma\left(m\right)}\int_{0}^{+\infty}t^{m-1}e^{-tw}\prod_{i=1}^{k}\frac{1}{1-e^{-a_{i}t}}dt.
\]
Replacing $m-k$ by $m$ and simplifying produces
\[
\int_{\mathbb{R}^{k}}\frac{\prod_{i=1}^{k}f_{L}\left(x_{i}\right)}{\left(w+\sum_{i=1}^{k}a_{i}\left(\imath x_{i}-\frac{1}{2}\right)\right)^{m}}\prod_{i=1}^{k}dx_i=\frac{1}{\Gamma\left(m\right)}\int_{0}^{+\infty}t^{m-1}e^{-wt}\left(\prod_{i=1}^{k}\frac{-a_{i}t}{e^{-a_{i}t}-1}\right)dt.
\]
Choosing $a_{k}=-1$ and $a_{k-i}=-u_{k-1}\dots u_{k-i},\thinspace\thinspace1\le i\le k-1$, now gives
\begin{align*}
\int_{\mathbb{R}^{k}}   &   \frac{\prod_{i=1}^{k}f_{L}\left(x_{i}\right)}{\left(w-\left(\imath x_{k}-\frac{1}{2}\right)-\sum_{i=1}^{k-1}u_{k-1}\dots u_{i}\left(\imath x_{i}-\frac{1}{2}\right)\right)^{m}}\prod_{i=1}^{k}dx_i   \\
&=\frac{1}{\Gamma\left(m\right)}\int_{0}^{+\infty}t^{m-1}e^{-wt}\left(\frac{t}{e^t-1}\prod_{i=1}^{k-1}\frac{u_{k-1}\dots u_{i}t}{e^{u_{k-1}\dots u_{i}t}-1}\right)dt.
\end{align*}
Integrating each $u_{i},\thinspace\thinspace1\le i\le k-1$ over the
interval $\left[0,1\right]$ gives
\begin{align}\label{keystep}
\int_{\left[0,1\right]^{k-1}}\int_{\mathbb{R}^{k}}&\frac{\prod_{i=1}^{k}f_{L}\left(x_{i}\right)}{\left(w-\left(\imath x_{k}-\frac{1}{2}\right)-\sum_{i=1}^{k-1}u_{k-1}\dots u_{i}\left(\imath x_{i}-\frac{1}{2}\right)\right)^{m}}\prod_{i=1}^{k}dx_i\prod_{i=1}^{k-1}du_{i}\nonumber \\
& =\frac{1}{\Gamma\left(m\right)}\int_{0}^{+\infty}t^{m-1}e^{-wt}\left(\int_{\left[0,1\right]^{k-1}}\frac{t}{e^{t}-1}\prod_{i=1}^{k-1}\frac{u_{k-1}\dots u_{i}t}{e^{u_{k-1}\dots u_{i}t}-1}du_i\right)dt.
\end{align}
This step essentially writes the Arakawa zeta function as a $(k-1)$ dimensional integral transform of the Barnes zeta function, a result parallel to Theorem \ref{thm3}.  From the definition of the poly-Bernoulli umbra, the left-hand side of Equation \eqref{keystep} is written as 
\[
\frac{1}{\left(w-\C^{\left( k \right)}\right)^{m}}.
\]
Meanwhile, we now recognize the inner integral on the right-hand side of Equation \eqref{keystep} as the generating
function of the $C^{\left(k\right)}$ sequence:
\[
\sum_{n\ge0}\frac{C_{n}^{\left(k\right)}}{n!}t^{n}=\frac{\Li_{k}\left(1-e^{-t}\right)}{e^{t}-1}.
\]
To see this, begin with Theorem \ref{thm2} 
$$\frac{Li_k(1-e^{-t})}{e^{t}-1} = \sum_{n=0}^\infty \frac{C_n^{(k)}}{n!}t^n  = \exp \left(t\left( \B_k+\U_{k-1}\B_{k-1}+\U_{k-1}\cdots\U_1 \B_1 \right)\right).$$
Then expand each $\B$ symbol through its generating function as
$$ \frac{t}{e^t-1}\times \frac{t\U_{k-1}}{e^{t\U_{k-1}}-1}\times \cdots \times \frac{t\U_{k-1}\cdots \U_1}{e^{t\U_{k-1}\cdots \U_1}-1}. $$
Now expand each $\U$ operator as an integral, giving the key identity
\begin{equation*}\label{keylem}
\frac{\Li_k(1-e^{-t})}{1-e^{-t}}  = \int_{[0,1]^{k-1}} t^k \prod_{i=1}^{k-1}u_i^i \prod_{i=1}^{k} \frac{1}{e^{t\sum_{j=i}^{k-1} u_i}-1}du_1\cdots du_{k-1}.
\end{equation*}
Therefore, the right-hand side is equal to the Arakawa-Kaneko zeta 
\[
\frac{1}{\Gamma\left(m\right)}\int_{0}^{+\infty}t^{m-1}e^{-wt}\frac{\Li_{k}\left(1-e^{-t}\right)}{e^{t}-1}dt=\zeta_{m}\left(k,w\right).
\]
Evaluating at $w=0$ yields the result.
\end{proof}
As an aside, this proof highlights the fasincating (and highly nontrivial) interplay between Bernoulli numbers and zeta functions. This proof was ultimately based on Theorem \ref{thm3}, a new representation for poly-Bernoulli numbers. This allowed us to prove an integral transform for Arakawa zeta function, which when analytically continued re-specializes to results on the poly-Bernoulli numbers. This result provides new integral representations for the starred MZVs, some of which we record here.
\begin{itemize}
\item
For $k=2,m=1,$ we have
\begin{equation}
\label{-z(3)}
\frac{1}{\B_{2}+\U_{1}\B_{1}}=-1.20206=-\zeta^{*}\left(3\right)=-\zeta\left(3\right)=-\zeta_{2}\left(1\right)
\end{equation}
so that
\[
\zeta=\left(3\right)=-\frac{\pi^{2}}{4}\int_{\mathbb{R}^{2}}\int_{0}^{1}\frac{{\sech}^{2}\left(\pi w_1\right){\sech}^{2}\left(\pi w_2\right)}{\imath w_2-\frac{1}{2}+u\left(\imath w_1-\frac{1}{2}\right)}dudw_2dw_1.
\]
\item
For $k=2,m=2,$ we have
\[
\frac{1}{\left(\B_{2}+\U_{1}\B_{1}\right)^{2}}=1.3529=\zeta^{*}\left(3,1\right)=\zeta_{2}\left(2\right)=\zeta\left(3,1\right)+\zeta\left(4\right)
\]
so that
\[
\zeta^{*}\left(3,1\right)=\frac{\pi^{2}}{4}\int_{\mathbb{R}^{2}}\int_{0}^{1}\frac{{\sech}^{2}\left(\pi w_1\right){\sech}^{2}\left(\pi w_2\right)}{\left(\imath w_2-\frac{1}{2}+u\left(\imath w_1-\frac{1}{2}\right)\right)^{2}}dudw_2dw_1.
\]
This integral can also be expressed as the double integral
\[
\zeta^{*}\left(3,1\right)=\frac{\pi^{2}}{4}\int_{\mathbb{R}^{2}}{\sech}^{2}\left(\pi w_1\right){\sech}^{2}\left(\pi w_2\right)
\frac{\imath w_1-\frac{1}{2}}{\left(\imath w_2-\frac{1}{2}\right)\left(\imath \left(w_2+ w_1\right)-1\right)}dw_2dw_1.
\]
This motivates the study of whether we can simplify some of the $(k-1)$ fold integrals corresponding to the $\U_i$ operators.
\item
For $k=2,m=3,$ we have
\[
\frac{1}{\left(\B_{2}+\U_{1}\B_{1}\right)^{3}}=-1.45884=-\zeta^{*}\left(3,1,1\right)=-\zeta_{2}\left(3\right)
\]
so that
\[
\zeta^{*}\left(3,1,1\right)=-\frac{\pi^{2}}{4}\int_{\mathbb{R}^{2}}\int_{0}^{1}\frac{\text{sech}^{2}\left(\pi w_1\right)\text{sech}^{2}\left(\pi w_2\right)}{\left(\imath w_2-\frac{1}{2}+u\left(\imath w_1-\frac{1}{2}\right)\right)^{3}}dudw_2dw_1.
\]
\end{itemize}


\section{Bernoulli symbols and summation}

In this last part, we illustrate the summation mechanism performed by the Bernoulli
and uniform umbr{\ae}.
\subsection{Telescoping}
Starting with the identity
\begin{equation}
\label{1/B}
\frac{1}{x+\B}=\sum_{k\ge0}\frac{1}{\left(x+k\right)^{2}},
\end{equation}
which holds for $x>\frac{1}{2}$ as a consequence of \eqref{psi}, and when evaluated at $x=1,$ gives
\[
\zeta\left(2\right)=\frac{1}{\B+1}=-\frac{1}{\B}.
\]
This result can be expressed in integral form as
\[
\frac{\pi}{2}\int_{\mathbb{R}}\frac{{\sech}^{2}\left(\pi w\right)}{\imath w+\frac{1}{2}}dw = \zeta\left( 2 \right).
\]
Renaming $\B$ as $\B_{2}$ and replacing $x$ by $x+\U_{1}\B_{1}$ produces
\[
\frac{1}{x+\B_{2}+\U_{1}\B_{1}}=\sum_{k\ge0}\frac{1}{\left(x+k+\U_{1}\B_{1}\right)^{2}}.
\]
Expressing the $U_{1}$ symbol as an integral over the interval $\left[ 0,1 \right]$ yields
\[
\frac{1}{x+\B_{2}+\U_{1}\B_{1}}=\sum_{k\ge0}\frac{1}{\B_{1}}\left[\frac{1}{\left(x+k\right)}-\frac{1}{\left(x+k+\B_{1}\right)}\right]=\sum_{k\ge0}\frac{1}{\left(x+k\right)\left(x+k+\B_{1}\right)},
\]
and then applying identity \eqref{1/B} again gives
\[
\frac{1}{x+\B_{2}+\U_{1}\B_{1}}=\sum_{k\ge0}\frac{1}{\left(x+k\right)}\sum_{l\ge0}\frac{1}{\left(x+k+l\right)^{2}}.
\]
Evaluating at $x=1$ produces
\[
\frac{1}{1+\B_{2}+\U_{1}\B_{1}}=\sum_{k\ge1,l\ge0}\frac{1}{k}\frac{1}{\left(k+l\right)^{2}}=\zeta^{*}\left(2,1\right)=\zeta\left(2,1\right)+\zeta\left(3\right)=2\zeta\left(3\right).
\]
This result should be compared with the expression \eqref{-z(3)} above, which we rewrite as
\[
\zeta\left( 3 \right) = \frac{1}{1+\B_{2}-\U_{1}\B_{1}}.
\]
\subsection{Computing MZVs}
Instead of multiple zeta star values, multiple zeta values can be obtained  in the computation above, by noticing that each Bernoulli symbol induces a sum that starts at 0. Replacing each Bernoulli symbol $\B_{i}$ by $1+\B_{i}$ then induces sums that start at 1, producing
\[
\frac{1}{1+\B_{k}+\U_{k-1}\left( 1+\B_{k-1} \right) +\dots+\U_{k-1}\dots\U_{1}\left( 1+ \B_{1}\right)} = \zeta\left( 2, \left\{ 1\right\}^{k-1} \right).
\]
Since for each $i$, $1+\B_{i}=-\B_{i}$, this fraction is identified as
\[
-\frac{1}{\C^{\left( k \right)}} = \zeta_{k}\left( 1 \right) = \zeta^{*}\left( k+1 \right) = \zeta\left( k+1 \right)
\]
and we deduce
\[
\zeta\left( k+1 \right) = \zeta\left( 2, \left\{ 1\right\}^{k-1} \right),
\]
an identity that is a consequence of the duality property of MZVs. Further studying the effect of the $\B+1 = -\B$ periodicity on relations for MZVs and MZSVs is an intriguing possibility for further study.
\section{Acknowledgements}
This one goes out to the gorgeous views of Luxembourg City, which inspired much of Section \ref{sec4}.


\end{document}